\documentclass[12pt]{amsart}

\addtolength{\hoffset}{-1cm} 
\addtolength{\textwidth}{2cm}

\usepackage{subcaption}
\usepackage{amssymb,latexsym,amsmath,amsthm,graphicx}
\usepackage{cite}

\newtheorem{theorem}{Theorem}
\newtheorem{lemma}[theorem]{Lemma}

\newtheorem*{conjecture}{Conjecture}
\theoremstyle{remark}

\begin{document}
\author[A. Thomack, Z. Tyree]{Andrew Thomack and Zachariah Tyree}
\title[Random harmonic polynomials]{On the zeros of random harmonic polynomials: the Weyl model}

\begin{abstract}
Li and Wei (2009) studied the density of zeros of Gaussian harmonic polynomials with independent Gaussian coefficients. They derived a formula for the expected number of zeros of random harmonic polynomials as well as asymptotics for the case that the polynomials are drawn from the Kostlan ensemble. In this paper we extend their work to cover the case that the polynomials are drawn from the Weyl ensemble by deriving asymptotics for this class of harmonic polynomials.
\end{abstract}

\maketitle

\section{Introduction}
A harmonic polynomial is a complex-valued harmonic function given by:
\[H_{n,m}(z)=p(z)+q(\overline{z}),\]
where $p$ and $q$ are analytic complex polynomials of degree $n$ and $m$ respectively and $\overline{z}$ denotes the complex conjugate of $z$.  Since $H$ is not analytic, the Fundamental Theorem of Algebra does not apply, and it is natural to ask how many zeros $H$ can have, i.e. how many solutions are there to $H_{n,m}(z)=0$.

Assuming $n\ge m$ and denoting by $\mathcal{N}(T)$ the number of zeros of $H$ in a domain $T\subseteq\mathbb{C}$ we can bound $\mathcal{N}$  for a generic choice of $p$ and $q$ as such:
\[n\le\mathcal{N}(\mathbb{C})\le n^2.\]
The upper bound is due to Wilmshurst who applied Bezout's Theorem to the real and imaginary part of $H(z)=0$ \cite{Wilmshurst}.  The lower bound is a consequence of a generalized argument principle.  In fact, these bounds are sharp for each $n$, though for $m=1$ the upper bound has been improved to $3n-2$ \cite{Khavinson}, and it has been conjectured for fixed $m$ that the upper bound is linear in $n$ \cite{Saez}, \cite{Lee}, \cite{Wilmshurst}.

Given the wide range of values that $\mathcal{N}$ can take and the lack of an explicit formula in $n$ and $m$ for $\mathcal{N}$, the next question is: given an ``arbitrary'' harmonic polynomial, what is the expected value of $\mathcal{N}$?

The question has been well studied in the framework of analytic polynomials.  Kac initiated the study by deriving an explicit formula for the expected number of zeros of a random real polynomial \cite{Kac}.  Subsequently, other authors developed similar formulae for trigonometric polynomials \cite{Dunnage}, complex polynomials over an arbitrary domain \cite{Vanderbei}, and polynomial vector fields \cite{Azais}.

In the context of harmonic polynomials, Li and Wei showed an explicit formula for $\mathbb{E}[\mathcal{N}_{n,m}]$ when the coefficients are independent complex Gaussians \cite{LiWei}.  Moreover, they showed that if $H$ satisfies
\begin{equation}
	\label{int:harmonic}
	H_{n,m}(z)=\sum_{j=0}^na_jz^j+\sum_{j=0}^mb_j\overline{z}^j
\end{equation}	
with $0\le m\le n$, where $a_j$ and $b_j$ are complex Gaussian random variables satisfying:
\begin{equation}\label{int:exp}
	\mathbb{E}[a_j]=\mathbb{E}[b_j]=0\text{ , }
	\mathbb{E}[a_j\overline{a}_k]=\delta_{jk}\binom{n}{j}\text{ , and }
	\mathbb{E}[b_j\overline{b}_k]=\delta_{jk}\binom{m}{j}
\end{equation}
then as $n\rightarrow\infty$,
\begin{equation*}
	\mathbb{E}[\mathcal{N}_{n,m}]\sim\left\{
	\begin{array}{ll}
		\frac{\pi}{4}n^{3/2}&m=n
		\\n&m=\alpha n+o(n),\ \alpha\in[0,1).
	\end{array}\right.
\end{equation*}

Notice that in the case where $m=\alpha n$ the modulus of $p$ is much larger than that of $q$ so that $H$ tends toward an analytic polynomial as $n$ increases.  Similarly, in this case $H$ asymptotically obeys the fundamental theorem of algebra.

A related result was proved by Lerario and Lundberg.  Choosing a slightly different definition of ``random,'' Lerario and Lundberg showed that if in (\ref{int:exp}) one instead defines $\mathbb{E}[b_j\overline{b}_k]=\delta_{jk}\binom{j}{k}$ then
\[\mathbb{E}[\mathcal{N}_{n,m}]\sim c_\alpha n^{3/2}\qquad\text{when $m=\alpha n$}\]
where $c_\alpha$ is a constant depending only on $\alpha\in(0,1]$ \cite{Lundberg}.  Moreover, $c_\alpha\rightarrow\frac{\pi}{4}$ as $\alpha\rightarrow1$ giving the asymptotic value of $\mathbb{E}[\mathcal{N}_{n,m}]$ a satisfying continuity in $\alpha$, a property shared by the particular model of random $H$ that is the focus of this article.

A well studied choice  of random coefficients for polynomilas is for $a_j$ and $b_j$ to be i.i.d. complex Gaussians (e.g. $\mathbb{E}[a_j\overline{a}_k]=\mathbb{E}[b_j\overline{b}_k]=\delta_{jk}$).  The first author provided asymptotics for the expected number of zeros in the cases when $m$ is fixed and when $m = n$ \cite{Thomack}.
\[\mathbb{E}[\mathcal{N}_{n,m}]\sim n\text{ as $n\rightarrow\infty$ and $m$ is fixed,}\] 
and there exists $c_1,c_2>0$ such that for large $n$
\[c_1n\log n\le\mathbb{E}[\mathcal{N}_{n,n}]\le c_2n\log n.\]

In this paper, we introduce another Gaussian model of harmonic polynomials, obtained by independently sampling $p$ and $q$ from the Weyl model.  That is, the polynomials satisfying (\ref{int:harmonic}) where $a_j$ and $b_j$ are complex Gaussian random variables satisfying:
\begin{equation}
	\mathbb{E}[a_j]=\mathbb{E}[b_j]=0\text{ , }\mathbb{E}[a_j\overline{a}_k]
	=\delta_{jk}\frac{1}{j!}\text{ , and }\mathbb{E}[b_j\overline{b}_k]
	=\delta_{jk}\frac{1}{j!}.
\end{equation}
The number of zeros when $m$ is fixed resembles that of the analogous complex analytic Weyl polynomials studied in \cite{Tao}.  The real Weyl polynomials have been found to have an expected number of real zeros asymptotic to $\sqrt{n}$ \cite{Kostlan}.
The formula for the expected number of zeros obtained by Li and Wei can be extended to more general Gaussian models of harmonic polynomials (see Theorem  \ref{thmLiWeiGeneral} below) including the Weyl model.  Using this result along with classical methods in the asymptotic analysis of integrals, we prove the following:
\begin{theorem}\label{MainResult}
	If $\mathcal{N}_{n,m}$ denotes the number of zeros of a random Weyl polynomial then
	\begin{equation}
		\label{MainThm}
		\mathbb{E}\left[\mathcal{N}_{n,m}(\mathbb{C})\right]\sim\left\{
		\begin{array}{ll}
			n&m\text{ is fixed,}
			\\\frac{1}{3}m^{3/2}&m=\alpha n+O(1),\ \alpha\in(0,1]
		\end{array}\right._.
	\end{equation}
\end{theorem}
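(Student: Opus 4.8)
My plan is to begin from the generalized Li--Wei formula (Theorem~\ref{thmLiWeiGeneral}), which writes $\mathbb{E}[\mathcal{N}_{n,m}(\mathbb{C})]$ as the integral over $\mathbb{C}$ of a Kac--Rice density built from the second moments of $p,q,p',q'$ at a point. Because the coefficient variances depend only on the index, every such moment is a radial function of $u:=|z|^2$, the angular integration is immediate, and the problem collapses (after absorbing the Jacobian $2\pi r$) to a single integral $\int_0^\infty F_{n,m}(r)\,dr$. All the ingredients are truncated exponentials: writing $S_N(u)=\sum_{j=0}^N u^j/j!$, one has $\mathbb{E}|p(z)|^2=S_n(u)$, $\mathbb{E}|q(\overline{z})|^2=S_m(u)$, $\mathbb{E}|p'(z)|^2=S_{n-1}(u)+uS_{n-2}(u)$ and $\mathbb{E}|q'(\overline{z})|^2=S_{m-1}(u)+uS_{m-2}(u)$, while the relevant cross moment is $\mathbb{E}[p(z)\overline{p'(z)}]=z\,S_{n-1}(u)$, with the analogue for $q$. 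The first step is to record $F_{n,m}$ explicitly in these terms.

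The analysis then rests on the elementary but decisive fact that $e^{-u}S_N(u)\to 1$ for $u<N$ and $\to 0$ for $u>N$, the crossover occupying a window of width $\sqrt N$ about $u=N$ (so that $e^{-N}S_N(N)\to\tfrac12$). Accordingly I split $[0,\infty)$ into an inner region $u\lesssim m$ on which both $S_n$ and $S_m$ track $e^u$; a middle region $m\lesssim u\lesssim n$ on which $S_m$ has saturated while $S_n$ still tracks $e^u$; an outer region $u\gtrsim n$ on which both have saturated; and the two transition windows near $u=m$ and $u=n$. The second step is to upgrade these statements to quantitative, uniform estimates for $F_{n,m}$ on each piece.

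The decisive third step is the inner region, where $p$ and $q$ are both statistically close to the planar (Bargmann--Fock) Gaussian analytic function. Here the Kac--Rice density equals $\mathbb{E}\big[\,\big||p'(z)|^2-|q'(\overline{z})|^2\big|\ \big|\ H_{n,m}(z)=0\big]$ times the value at $0$ of the density of $H_{n,m}(z)$. Conditioning on $H_{n,m}(z)=0$ couples $p'$ and $q'$ through their common correlation with $p$ and $q$, and as $u\to\infty$ this conditional covariance matrix becomes asymptotically singular: $p'(z)$ and $q'(\overline{z})$ become nearly perfectly anticorrelated, so the Jacobian $|p'|^2-|q'|^2$ concentrates near zero and must be resolved at its fluctuation scale. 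Carrying out the resulting Gaussian expectation (a short computation diagonalizing the $2\times2$ conditional covariance) gives the limiting intensity $\rho(z)\sim |z|/(2\pi)$. Integrating over the disk $\{|z|<\sqrt m\,\}$ produces $\int_0^{\sqrt m}\tfrac{r}{2\pi}\,2\pi r\,dr=\tfrac13 m^{3/2}$. On the middle region $q$ is negligible, $H_{n,m}$ behaves like the analytic Weyl polynomial $p$, the density relaxes to the Edelman--Kostlan value $1/\pi$, and the contribution is of order $n$. When $\alpha\in(0,1]$ the inner term $\tfrac13 m^{3/2}$ dominates this $O(n)$ contribution; when $m$ is fixed the inner disk is bounded and contributes only $O(1)$, so the analytic count $n$ is the main term. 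This yields both lines of \eqref{MainThm}.

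The principal obstacle is uniform control across the two transition windows $u=m+O(\sqrt m)$ and $u=n+O(\sqrt n)$, where neither the $e^u$ approximation nor the saturated approximation for $S_N$ is valid and where $F_{n,m}$ involves near-cancellations among $S_N(u)$, $S_{N-1}(u)$ and $S_{N-2}(u)$. I expect to treat these with the standard Gaussian (central-limit) approximation of the truncated exponential, showing each window is of lower order than the relevant main term. The other delicate point is the passage to the limit $\rho(z)\sim|z|/(2\pi)$: because the conditional covariance degenerates, I must exhibit an integrable dominating function for the rescaled integrand (in the variable $s$ with $r=\sqrt m\,s$) to justify interchanging limit and integral. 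Once that domination is in place the extraction of the constant $\tfrac13$ is the routine one-dimensional integral above, and the remaining regions are lower order by the estimates of the second step.
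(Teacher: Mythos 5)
Your proposal is correct, and its overall architecture coincides with the paper's: both start from Theorem \ref{thmLiWeiGeneral}, reduce to a one-dimensional radial integral, identify the same three regimes (the disk $|z|<\sqrt{m}$, the annulus out to $\sqrt{n}$, the exterior), and finish by dominated convergence, so that $\tfrac13 m^{3/2}$ arises as the integral of a limiting density growing like $|z|/(2\pi)$ over the inner disk and $n$ as the integral of the flat density $1/\pi$ out to radius $\sqrt{n}$. Where you genuinely differ is in how the limiting density is obtained and how the crossovers are controlled. The paper rescales once and for all ($|z|^2=nt$, dividing by $n^{3/2}$ or $n$), writes the truncated exponentials via the incomplete Gamma function, and extracts the pointwise limits by Laplace's method (interior maximum for $t<\alpha$, endpoint analysis via Watson's lemma for $t>\alpha$), getting $\tfrac12\sqrt{t}$ on $(0,\alpha)$ and $0$ beyond; in these scaled variables your transition windows collapse to the measure-zero points $t\in\{\alpha,1\}$ and need no separate treatment. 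You instead compute the inner-region density probabilistically: conditioning on $H(z)=0$ in the Bargmann--Fock limit, noting that $p'(z)$ and $\overline{q'(\overline{z})}$ become nearly perfectly anticorrelated so the Jacobian must be resolved at its fluctuation scale, and evaluating the Gaussian expectation. This route does give the correct constant, and your limit $|z|/(2\pi)$ agrees to leading order with the paper's exact limit $\sqrt{|z|^2+1}/(2\pi)$; it also supplies precisely the heuristic explanation of the high-density core that the paper's introduction explicitly says it lacks, which is the real payoff of your approach. What the paper's route buys in exchange is that the hard technical step you defer --- exhibiting dominating functions --- becomes concrete: its domination is assembled from explicit inequalities (Lemma \ref{CSApplication} and Lemma \ref{ac-b2}, $a_kc_k-b_k^2\le a_{k-1}b_k$) and constitutes the bulk of the written proof, whereas your unscaled decomposition additionally forces genuine work in the two CLT windows that the paper's normalization eliminates. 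One small flag: the case $\alpha=1$ (i.e.\ $m=n$) is treated separately in the paper because its dominating functions for $\alpha<1$ lose integrability there; your outline does not single this case out, and your deferred domination argument would need a separate tail bound for it.
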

Moreover, the proof of the theorem provides detailed information on the so-called ``first intensity'', the average local density of zeros which is determined by the integrand appearing in the Kac-Rice formula stated in Theorem \ref{thmLiWeiGeneral}.  We observe distinct behavior over three regions (see Figure \ref{denistyPlots}).  The high density in the central core is particularly striking and provokes further study.  It would be desireable to find a heuristic explanation of this phenomenon.
\begin{figure}[h]
	\begin{subfigure}[b]{.35\textwidth}
		\includegraphics[width=\textwidth]{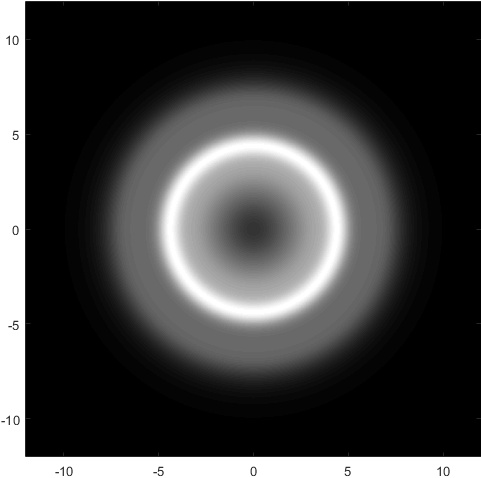}
		\caption{$m=16$, $n=64$}
	\end{subfigure}
	\begin{subfigure}[b]{.35\textwidth}
		\includegraphics[width=\textwidth]{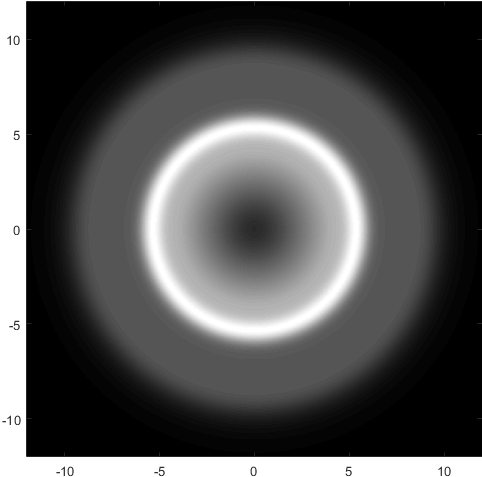}
		\caption{$m=25$, $n=100$}
	\end{subfigure}
	\caption{The first intensity function as a density plot for two values of $n$ 	and $m=0.25n$.  In both plots we see three regions, one disc of radius $\sqrt{m}$ containing a high density of zeros, one annulus of radius $\sqrt{n}$ with a less dense, almost constant, distribution of zeros, and the complement of the disc of radius $\sqrt{n}$ which has extremely low density of zeros.}
	\label{denistyPlots}
\end{figure}

\section{A Kac-Rice formula for Gaussian harmonic polynomials}
\begin{theorem}\label{thmLiWeiGeneral}
	The expectation $\mathbb{E}\mathcal{N}_H(T)$ of the number of zeros of 
	\[H_{n,m}(z)=\sum_{j=0}^na_jz^j+\sum_{j=0}^mb_j\overline{z}^j\]
	with $\mathbb{E}a_j=\mathbb{E}b_j=0$ and 
	$\mathbb{E}a_j\overline{a}_j=\alpha_j$ and 
	$\mathbb{E}b_j\overline{b}_j=\beta_j$ on a domain 
	$T\subset\mathbb{C}$ is given by:
	\begin{equation}
		\label{LiWeiGeneral}
		\mathbb{E}\mathcal{N}_F(T)=\frac{1}{\pi}\int_T\frac{1}{\lvert z\rvert^2}
		\frac{r_1^2+r_2^2-2r_{12}^2}{r_3^2\sqrt{(r_1+r_2)^2-4r_{12}^2}}dA(z),
	\end{equation}
	where $dA(z)$ denotes the Lebesgue measure on the plane, and
	\begin{align*}
		r_3=&\sum_{j=0}^n\alpha_j\lvert z\rvert^{2j}
		+\sum_{j=0}^m\beta_j\lvert z\rvert^{2j},\qquad\qquad\quad 
		r_{12}=\left(\sum_{j=1}^nj\alpha_j\lvert z\rvert^{2j}\right)
		\left(\sum_{j=1}^mj\beta_j\lvert z\rvert^{2j}\right),
		\\r_1=&r_3\sum_{j=1}^nj^2\alpha_j\lvert z\rvert^{2j}
		-\left(\sum_{j=1}^nj\alpha_j\lvert z\rvert^{2j}\right)^2,\ \ 
		r_2=r_3\sum_{j=1}^mj^2\beta_j\lvert z\rvert^{2j}
		-\left(\sum_{j=1}^mj\beta_j\lvert z\rvert^{2j}\right)^2.
	\end{align*}
\end{theorem}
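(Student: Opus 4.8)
The plan is to apply the Kac--Rice formula to the map $z\mapsto H_{n,m}(z)$ viewed as a random field from $\mathbb{R}^2$ to $\mathbb{R}^2$, exactly as in the argument of Li and Wei; the only change is that their explicit variances are replaced by the general $\alpha_j,\beta_j$, so the entire proof reduces to a Gaussian covariance computation at each fixed $z$. Writing $p(z)=\sum_{j=0}^n a_jz^j$ and $q(w)=\sum_{j=0}^m b_jw^j$ so that $H=p(z)+q(\overline z)$, I would first record that the zeros of $H$ are almost surely nondegenerate and express the real Jacobian through Wirtinger derivatives,
\[
J_H(z)=\lvert\partial_zH\rvert^2-\lvert\partial_{\overline z}H\rvert^2=\lvert p'(z)\rvert^2-\lvert q'(\overline z)\rvert^2 .
\]
The Kac--Rice formula then reads
\[
\mathbb{E}\,\mathcal{N}_H(T)=\int_T\mathbb{E}\bigl[\,\lvert J_H(z)\rvert\,\bigm|\,H(z)=0\,\bigr]\,p_{H(z)}(0)\,dA(z),
\]
where $p_{H(z)}(0)$ denotes the density of $H(z)$ at $0$, so it remains only to evaluate the density factor and the conditional expectation pointwise.

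Because the $a_j,b_j$ are independent circularly symmetric complex Gaussians, the triple $\bigl(H(z),p'(z),q'(\overline z)\bigr)$ is a centered, circularly symmetric complex Gaussian vector, and I would compute its Hermitian covariance directly from the prescribed second moments: one finds $\mathbb{E}\lvert H\rvert^2=r_3$, the diagonal entries $\mathbb{E}\lvert p'\rvert^2=\lvert z\rvert^{-2}\sum j^2\alpha_j\lvert z\rvert^{2j}$ and $\mathbb{E}\lvert q'\rvert^2=\lvert z\rvert^{-2}\sum j^2\beta_j\lvert z\rvert^{2j}$, the cross terms $\mathbb{E}[H\overline{p'}]=\tfrac{z}{\lvert z\rvert^2}\sum j\alpha_j\lvert z\rvert^{2j}$ and $\mathbb{E}[H\overline{q'}]=\tfrac{\overline z}{\lvert z\rvert^2}\sum j\beta_j\lvert z\rvert^{2j}$, and $\mathbb{E}[p'\overline{q'}]=0$ by independence of the $a$'s and $b$'s. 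The density factor is then immediate, $p_{H(z)}(0)=1/(\pi r_3)$, and the Gaussian regression formula gives the conditional law of $(p',q')$ given $H=0$ as a centered complex Gaussian whose $2\times2$ Hermitian covariance $C$ has diagonal entries $r_1/(r_3\lvert z\rvert^2)$ and $r_2/(r_3\lvert z\rvert^2)$ and off-diagonal entry of modulus $r_{12}/(r_3\lvert z\rvert^2)$; a rotation of $q'$ removes the phase of the off-diagonal entry without altering $\lvert J_H\rvert$.

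It then remains to evaluate $\mathbb{E}[\lvert J_H\rvert\mid H=0]$, which is the crux of the argument. Writing $J_H=v^{*}Mv$ with $v=(p',q')^{\top}$ and $M=\operatorname{diag}(1,-1)$, the quantity is a Hermitian quadratic form in a complex Gaussian vector; diagonalizing $C^{1/2}MC^{1/2}$ expresses it as $\mu_1\lvert w_1\rvert^2+\mu_2\lvert w_2\rvert^2$ for independent standard complex Gaussians $w_1,w_2$, where $\mu_1,\mu_2$ are the eigenvalues of $MC$. Positive definiteness of $C$ (equivalently $r_1r_2>r_{12}^2$) forces $\mu_1>0>\mu_2$, so that, with $\lvert w_i\rvert^2$ independent unit-mean exponentials $E_1,E_2$, the needed quantity is $\mathbb{E}\lvert\mu_1E_1-(-\mu_2)E_2\rvert$. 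I would then invoke the elementary identity $\mathbb{E}\lvert aE_1-bE_2\rvert=(a^2+b^2)/(a+b)$ for $a,b>0$ and read off $\mu_1+\mu_2$, $\mu_1\mu_2$, and $\mu_1-\mu_2$ from the trace and determinant of $MC$; a short simplification turns $(a^2+b^2)/(a+b)$ into $\frac{r_1^2+r_2^2-2r_{12}^2}{r_3\lvert z\rvert^2\sqrt{(r_1+r_2)^2-4r_{12}^2}}$, and multiplying by $1/(\pi r_3)$ produces the integrand of (\ref{LiWeiGeneral}). The main obstacle is precisely this conditional-expectation step: one must track the phase correctly through the regression formula and handle the \emph{indefinite} form $M$, for which the sign pattern $\mu_1>0>\mu_2$ is essential. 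The remaining analytic points---almost-sure nondegeneracy of the zeros, the hypotheses of Kac--Rice, and local integrability of the integrand for $z\neq0$---are routine but should be verified to license the formula on a general domain $T$.
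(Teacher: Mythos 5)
Your proposal is correct, and it is essentially the paper's approach: the paper itself gives no independent proof of this theorem, stating only that the proof of Li and Wei goes through verbatim with the specific variances replaced by arbitrary $\alpha_j,\beta_j$, and what you have written is a faithful reconstruction of exactly that Kac--Rice argument (density factor $1/(\pi r_3)$, Gaussian regression for the conditional covariance of $(p',q')$, and evaluation of $\mathbb{E}\lvert\mu_1E_1-\lvert\mu_2\rvert E_2\rvert=(\mu_1^2+\mu_2^2)/(\mu_1-\mu_2)$ via the indefinite form $\mathrm{diag}(1,-1)$). Your covariance entries, the identity $\mathbb{E}\lvert aE_1-bE_2\rvert=(a^2+b^2)/(a+b)$, and the trace/determinant bookkeeping for $MC$ all check out and reproduce the integrand of (\ref{LiWeiGeneral}) exactly.
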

	As written, this theorem is more general than as it is written in \cite{LiWei} and \cite{Lundberg}, by replacing specific variances of $a_j$ and $b_j$ with arbitrary positive values $\alpha_j$ and $\beta_j$, though the same proof suffices.

\section{Proof of Theorem \ref{MainResult}}
Our strategy for finding the asymptotic value of $\mathcal{N}_H$ as $n\rightarrow\infty$ is to find an appropriate change of variables, and then use the Lebesgue Dominated Convergence Theorem, or the following generalization of it from \cite[\S4.4]{Royden}:
\begin{theorem}[General Lebesgue Dominated Convergence Theorem]\label{GLDCT}
Let $\{f_n\}$ be a sequence of measurable functions on $E$ that converges p.w. a.e. on $E$ to $f$.  Suppose there is a sequence $\{g_n\}$ of nonnegative measurable functions on $E$ that converges p.w. a.e. on $E$ to $g$ and dominates $\{f_n\}$ on $E$ in the sense that $\lvert f_n\rvert\le g_n$ on $E$ for all $n$.  If 
\[\lim_{n\rightarrow\infty}\int_Eg_n=\int_Eg<\infty,\ \ then\ \ \lim_{n\rightarrow\infty}\int_Ef_n=\int_Ef.\]
\end{theorem}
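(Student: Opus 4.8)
The plan is to derive the conclusion from two applications of Fatou's Lemma, applied to the auxiliary sequences $\{g_n+f_n\}$ and $\{g_n-f_n\}$. These are nonnegative precisely because of the domination hypothesis $\lvert f_n\rvert\le g_n$, which is what lets Fatou be invoked. I treat the $f_n$ as real-valued; the complex-valued case of interest for the Kac--Rice integrand follows by applying the real case to real and imaginary parts separately, since $\lvert\operatorname{Re}f_n\rvert,\lvert\operatorname{Im}f_n\rvert\le\lvert f_n\rvert\le g_n$.

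First I would record the finiteness facts that make the later cancellations legitimate. Letting $n\to\infty$ in $\lvert f_n\rvert\le g_n$ gives $\lvert f\rvert\le g$ a.e., so $\int_E\lvert f\rvert\le\int_E g<\infty$ and $f$ is integrable; likewise each $f_n$ is integrable. Hence every integral appearing below is finite, and in particular the quantity $\int_E g$ that I will subtract from both sides of an inequality is finite, so no indeterminate $\infty-\infty$ ever arises.

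Next, applying Fatou's Lemma to $\{g_n+f_n\}\ge 0$ and using $g_n+f_n\to g+f$ a.e., I obtain $\int_E(g+f)\le\liminf_n\int_E(g_n+f_n)=\int_E g+\liminf_n\int_E f_n$, where the final equality uses the hypothesis $\int_E g_n\to\int_E g$ (a convergent sequence may be pulled out of a $\liminf$). Cancelling the finite quantity $\int_E g$ gives $\int_E f\le\liminf_n\int_E f_n$. Running the identical argument on $\{g_n-f_n\}\ge 0$, now with $g_n-f_n\to g-f$ a.e. and $\liminf_n(-\int_E f_n)=-\limsup_n\int_E f_n$, produces $\int_E(g-f)\le\int_E g-\limsup_n\int_E f_n$, and cancelling $\int_E g$ yields $\limsup_n\int_E f_n\le\int_E f$. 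Chaining the two, $\int_E f\le\liminf_n\int_E f_n\le\limsup_n\int_E f_n\le\int_E f$, forces equality throughout, so $\lim_n\int_E f_n$ exists and equals $\int_E f$.

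The only delicate point, and the sole place where the full strength of the hypothesis is needed, is the cancellation of $\int_E g$: this requires both $\int_E g<\infty$ and the convergence $\int_E g_n\to\int_E g$, which together replace the role of the fixed dominating function in the ordinary Dominated Convergence Theorem (where $\int_E g_n=\int_E g$ holds trivially for every $n$). Everything else is routine once Fatou's Lemma is in hand.
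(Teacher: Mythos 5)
Your proof is correct, and it is essentially the argument the paper relies on: the paper gives no proof of its own but cites Royden \S 4.4, whose proof is exactly this two-sided application of Fatou's Lemma to the nonnegative sequences $g_n+f_n$ and $g_n-f_n$, using $\lim_n\int_E g_n=\int_E g<\infty$ to cancel $\int_E g$ and trap $\liminf_n\int_E f_n$ and $\limsup_n\int_E f_n$ between $\int_E f$ and itself. The only (harmless) imprecision is the claim that \emph{each} $f_n$ is integrable, which the hypotheses guarantee only for all sufficiently large $n$; since the conclusion concerns limits, discarding finitely many terms fixes this.
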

We proceed by finding the limit of the integrand, then we show that we can find a sequence of integrable bounding functions satisfying the theorem above.

We first apply Theorem \ref{LiWeiGeneral} and introduce some useful notation.  Let
\[a_n=\sum_{j=0}^n\frac{\lvert z\rvert^{2j}}{j!},\quad b_n=\sum_{j=1}^nj\frac{\lvert z\rvert^{2j}}{j!},\quad c_n=\sum_{j=1}^nj^2\frac{\lvert z\rvert^{2j}}{j!}.\]
Then we may write $r_1$, $r_2$, $r_3$, and $r_{12}$ as follows
\begin{equation*}
r_3=a_n+a_m,\quad r_{12}=b_nb_m,\quad r_1=r_3c_n-b_n^2,\quad r_2=r_3c_m-b_m^2.
\end{equation*}
Throughout the proof we may apply change of variables, in which case it is assumed that $a_n$, $b_n$, and $c_n$ are the same function with the appropriate change of variables applied.
\subsection{Pointwise limit of the first intensities}
We begin by addressing the case when $m=\alpha n$, $\alpha\in(0,1]$.  We first notice that we have some convenient relationships between $a_k$, $b_k$, and $c_k$.
\begin{equation}
b_k=\lvert z\rvert^2a_{k-1}\qquad c_k=\lvert z\rvert^4a_{k-2}+\lvert z\rvert^2a_{k-1}
\end{equation}Since the limit of $a_n$ is equal to that of $a_{n-1}$, $a_{n-2}$, $a_m$, $a_{m-1}$, and $a_{m-2}$ we get that as $n\rightarrow\infty$
\begin{equation}
\frac{r_1^2+r_2^2-2r_{12}^2}{\lvert z\rvert^2r_3^2\sqrt{(r_1+r_2)^2-4r_{12}^2}}\rightarrow\frac{\sqrt{\lvert z\rvert^2+1}}{2}.
\end{equation}
However, this pointwise convergence is not dominated by an integrable function.  In order to obtain a dominated convergence we will need to perform a change of variable and divide by the constant $n^{3/2}$ which allows us to integrate the limit of the new integrand.

We perform the change of variables, first from Cartesian to polar,
\begin{equation*}
\frac{1}{\pi}\int_{\mathbb{C}}\frac{r_1^2+r_2^2-2r_{12}^2}{\lvert z\rvert^2r_3^2\sqrt{(r_1+r_2)^2-4r_{12}^2}}dA(z)=2\int_0^\infty\frac{r_1^2+r_2^2-2r_{12}^2}{\lvert z\rvert r_3^2\sqrt{(r_1+r_2)^2-4r_{12}^2}}d\lvert z\rvert,
\end{equation*}
and then using the change of variables $\lvert z\rvert^2=nt$ this becomes
\begin{equation}\label{MainInt}
\int_0^\infty\frac{r_1^2+r_2^2-2r_{12}^2}{t r_3^2\sqrt{(r_1+r_2)^2-4r_{12}^2}}dt,
\end{equation}
where $r_1$, $r_2$, $r_3$ and $r_{12}$ are defined as before with $nt$ substituted for $\lvert z\rvert^2$.  Note that with this change of variable, the notation $a_n$ becomes ambiguous, thus we clarify that the $n$ in $a_n$ refers only to the upper bound in the index of the sum, that is, $a_{n-1}=\sum_{j=0}^{n-1}(nt)^j/j!$.

We use the incomplete Gamma function in combination with Laplace's method to evaluate the limit of the integrand.  We have the following identity for the incomplete Gamma function when $n\in\mathbb{N}$,
\[\Gamma(n,t):=\int_t^\infty x^{n-1}e^{-x}dx=(n-1)!e^{-t}\sum_{j=0}^{n-1}\frac{t^j}{j!}\]
which implies
\[\sum_{j=0}^n\frac{t^j}{j!}=\frac{e^t\Gamma(n+1,t)}{n!}\]
and for a function $k=k(n)$ which is $O(1)$ as $n\rightarrow \infty$ such that $\alpha n +k\in\mathbb{N}$,
\begin{equation}
	\label{incGamma}
	\sum_{j=0}^{\alpha n-k}\frac{(nt)^j}{j!}
	=\frac{e^{nt}\Gamma(\alpha n-k+1,nt)}{(\alpha n-k)!}
	=\frac{e^{nt}}{(\alpha n-k)!}\int_{nt}^\infty x^{\alpha n-k}e^{-x}dx.
\end{equation}
We make the change of variables, $x=ny$
\begin{equation}
	\label{GammaChange}
	\int_t^\infty(ny)^{\alpha n-k}e^{-ny}n\ dy
	=n^{\alpha n-k+1}\int_t^\infty e^{(\alpha \ln y-y-\frac{k}{n}\ln y)n}dy.
\end{equation}
We let $g(y)=\alpha \ln y-y-\frac{k}{n}\ln y$ and thus $g'(y)=\frac{1}{y}\left(\alpha-\frac{k}{n}\right)-1$ implies $g$ is maximized at $\alpha-\frac{k}{n}$.  For $t<\alpha$ and $k$ (not dependent on $t$) there is a large enough $n$ so that $\alpha-\frac{k}{n}\in(t,\infty)$.  We may then use the Laplace method.
\begin{align*}
	\int_t^\infty e^{(\alpha\ln y-y-\frac{k}{n}\ln y)n}dy\sim
		&\int_{-\infty}^\infty e^{n\left(\left(\alpha-\frac{k}{n}\right)
		\left(\ln\left(\alpha-\frac{k}{n}\right)-1\right)
		-\frac{1}{2}\frac{1}{\alpha-\frac{k}{n}}
		\left(y-\alpha+\frac{k}{n}\right)^2\right)}dy
	\\=&e^{n\left(\left(\alpha-\frac{k}{n}\right)
		\left(\ln\left(\alpha-\frac{k}{n}\right)-1\right)\right)}
		\int_{-\infty}^\infty e^{\frac{-\left(y-\alpha+\frac{k}{n}\right)^2}
		{2\frac{\alpha n-k}{n^2}}}dy
	\\=&e^{n\left(\left(\alpha-\frac{k}{n}\right)
		\left(\ln\left(\alpha-\frac{k}{n}\right)-1\right)\right)}
		\frac{\sqrt{2\pi(\alpha n-k)}}{n}.
\end{align*}
Combining the above asymptotic with (\ref{incGamma}) and (\ref{GammaChange}), it follows from Stirling's formula that
\[\lim_{n\rightarrow\infty}\frac{a_{\alpha n-k}}{e^{nt}}=\lim_{n\rightarrow\infty}\frac{a_{n}}{e^{nt}}=1,\]
where we simply plug in $1$ for $\alpha$ and $0$ for $k$ to find $a_n$.  Then $b_{\alpha n}\sim b_n\sim nte^{nt}$ and $c_{\alpha n}\sim c_n\sim(nt+n^2t^2)e^{nt}$ and
\[\lim_{n\rightarrow\infty}\frac{r_1^2+r_2^2-2r_{12}^2}{n^{3/2}t r_3^2\sqrt{(r_1+r_2)^2-4r_{12}^2}}=\frac{1}{2}\sqrt{t},\quad\text{for $t<\alpha$}.\]

For $t>\alpha$ we will use the endpoint analysis of the Laplace method as outlined in ~\cite[\S3.3]{Miller}.  We begin by using the incomplete gamma function as before, defining $\kappa=\alpha+O(\frac{1}{n})$ so that $\kappa n\in\mathbb{N}$
\[a_{\kappa n}=\frac{e^{nt}n^{\kappa n+1}}{(\alpha n)!}\int_{t}^\infty e^{n(\kappa\log y-y)}dy.\]
Now set $g(y)=\kappa\log y-y$.  For large enough $n$, this function is maximized at $t$ for $t>\alpha$, which can be seen by $g'(y)=\frac{\kappa}{y}-1<0$ for large enough $n$.  Let $\tilde{g}(y)=g(y)-g(t)$.  Then $a_{\kappa n}$ can be rewritten as
\[a_{\kappa n}=\frac{n^{\kappa n+1}t^{\kappa n}}{(\kappa n)!}\int_{0}^\infty e^{n\tilde{g}(t+y)}dy.\]
Then there is a function, $y(s)$, guaranteed by the implicit function theorem, such that $\tilde{g}(t+y(s))=-s$.  We use this as a change of variables and obtain
\[a_{\kappa n}=\frac{n^{\kappa n+1}t^{\kappa n}}{(\kappa n)!}\int_{0}^\infty e^{-ns}y'(s)ds\]
which is in the form necessary to use Watson's lemma, which we rewrite to our specific context.
\begin{theorem}
	Suppose $y'(s)$ is smooth around a neighborhood of $s=0$ and absolutely integrable on $[0,\infty)$.  Then the 
	exponential integral
	\[F(n):=\int_0^\infty e^{-ns}y'(s)ds\]
	is finite for all $n>0$ and it has the asymptotic expansion
	\[F(n)\sim\sum_{k=0}^\infty\frac{y^{(k+1)}(0)}
	{n^{k+1}}\quad\text{as $n\rightarrow\infty$.}\]
\end{theorem}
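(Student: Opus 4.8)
The plan is to prove this as the classical Watson's lemma: split the integral at a small $\delta>0$, Taylor-expand $y'$ on $[0,\delta]$ to generate the series, and show the tail on $[\delta,\infty)$ is exponentially negligible. Finiteness of $F(n)$ for every $n>0$ is immediate and I would dispose of it first: since $e^{-ns}\le 1$ for all $s\ge0$ and $y'$ is assumed absolutely integrable, we have $\lvert F(n)\rvert\le\int_0^\infty\lvert y'(s)\rvert\,ds<\infty$.

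For the asymptotic expansion I would fix an arbitrary truncation order $N$ and argue that the stated series agrees with $F(n)$ up to $o(n^{-(N+1)})$. Because $y'$ is smooth near $0$, Taylor's theorem gives, on the neighborhood $[0,\delta]$,
\[y'(s)=\sum_{k=0}^N\frac{y^{(k+1)}(0)}{k!}s^k+R_N(s),\qquad\lvert R_N(s)\rvert\le C_N\,s^{N+1},\]
where I have used $(y')^{(k)}(0)=y^{(k+1)}(0)$ and $C_N$ bounds $\lvert(y')^{(N+1)}\rvert/(N+1)!$ on $[0,\delta]$. The workhorse is the elementary Laplace-transform identity $\int_0^\infty e^{-ns}s^k\,ds=k!/n^{k+1}$, which turns each monomial into precisely one term of the claimed expansion,
\[\int_0^\infty e^{-ns}\frac{y^{(k+1)}(0)}{k!}s^k\,ds=\frac{y^{(k+1)}(0)}{n^{k+1}}.\]

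It then remains to control three error contributions, all of which I expect to be routine. Extending each monomial integral from $[0,\delta]$ to $[0,\infty)$ costs only $\int_\delta^\infty e^{-ns}s^k\,ds\le k!\,e^{-(n-1)\delta}$ (using $e^{-(n-1)s}\le e^{-(n-1)\delta}$ for $s\ge\delta$), which is exponentially small. The Taylor remainder contributes $\int_0^\delta e^{-ns}\lvert R_N(s)\rvert\,ds\le C_N\int_0^\infty e^{-ns}s^{N+1}\,ds=C_N(N+1)!\,n^{-(N+2)}=o(n^{-(N+1)})$. Finally the tail is handled with only the absolute-integrability hypothesis: for $s\ge\delta$ we have $e^{-ns}\le e^{-n\delta}$, so $\int_\delta^\infty e^{-ns}\lvert y'(s)\rvert\,ds\le e^{-n\delta}\int_\delta^\infty\lvert y'(s)\rvert\,ds=O(e^{-n\delta})$. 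Summing these estimates yields $F(n)=\sum_{k=0}^N y^{(k+1)}(0)\,n^{-(k+1)}+o(n^{-(N+1)})$, and since $N$ was arbitrary this is exactly the asserted asymptotic expansion.

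The argument is classical and presents no serious obstacle; the only points requiring care are bookkeeping ones. First, the conclusion is an \emph{asymptotic} (not necessarily convergent) series, so one must be careful to show that both the Taylor remainder and the tail are genuinely $o$ of the last retained term $n^{-(N+1)}$, with the estimates uniform in $s$ for each fixed $N$ but not claimed uniform in $N$. Second, one must verify that mere absolute integrability of $y'$, with no assumed pointwise decay, is enough to kill the tail — which it is, precisely because the factor $e^{-ns}$ is uniformly exponentially small on $[\delta,\infty)$.
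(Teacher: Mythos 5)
Your proposal is correct, and it is the classical proof of Watson's lemma: split the integral at a small $\delta$ inside the neighborhood of smoothness, Taylor-expand $y'$ there, convert each monomial via $\int_0^\infty e^{-ns}s^k\,ds=k!/n^{k+1}$, and verify that the extension error, the Taylor remainder, and the tail (controlled only by absolute integrability and the uniform bound $e^{-ns}\le e^{-n\delta}$ on $[\delta,\infty)$) are each $o(n^{-(N+1)})$ for every fixed $N$. The paper gives no proof of its own --- it states the result as Watson's lemma rewritten to its context and defers to Miller's textbook (\S 3.3), where essentially this same standard argument appears --- so your write-up correctly supplies exactly the argument the paper implicitly relies on.
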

One can solve for the derivatives of $y(s)$ by recursively using Taylor polynomials, though the first three in terms of $g(y)$ and its derivatives are computed in ~\cite{Miller}.
\[y'(0)=\frac{-1}{g'(t)}=-\frac{t}{\kappa-t},\qquad y''(0)=-\frac{g''(t)}{g'(t)^3}=\frac{\kappa t}{(\kappa-t)^3},\]
\[y'''(0)=\frac{g'''(t)g'(t)-3g''(t)^2}{g'(t)^5}=\frac{t}{(\kappa-t)^5}\left(2(\kappa-t)-3\kappa^2\right)\]
thus
\begin{equation}\label{Milleran}
	a_{\kappa n}=\frac{(nt)^{\kappa n}}{(\kappa n)!}
	\left(\frac{t}{t-\kappa}-\frac{\kappa t}{(t-\kappa)^3}\frac{1}{n}
	-\frac{t(2\kappa-3\kappa^2-2t)}{(t-\kappa)^5}\frac{1}{n^2}+O(n^{-3})\right)
\end{equation}
as $n\rightarrow\infty$.  It follows that
\begin{align}
	b_{\kappa n}=&\frac{(nt)^{\kappa n+1}}{(\kappa n)!}
		\left(\frac{\kappa}{t-\kappa}-\frac{\kappa t}{(t-\kappa)^3}\frac{1}{n}
		-\frac{t(2\kappa-3\kappa^2-2t)}{(t-\kappa)^5}\frac{1}{n^2}
		+O(n^{-3})\right),	
		\label{Millerbn}
	\\c_{\kappa n}=&\frac{(nt)^{\kappa n+2}}{(\kappa n)!}
		\left(\frac{\kappa^2}{t(t-\kappa)}+\frac{(\kappa-1)t^2-2t\kappa^2
		+\kappa^3}{t(t-\kappa)^3}\frac{1}{n}\right)
		\label{Millercn}
	\\ &+\frac{(nt)^{\kappa n+2}}{(\kappa n)!}
		\left(\frac{(2-\kappa)t^2+(5\kappa^2-2\kappa) t-\kappa^3}{(t-\kappa)^5}
		\frac{1}{n^2}+O(n^{-3})\right).\nonumber
\end{align}
We now notice that for $0<\alpha\le t<1$,
\[\lim_{n\rightarrow\infty}\frac{(nt)^{\kappa n}}{(\kappa n)!e^{nt}}=0.\]
Since $a_n\sim e^{nt}$ is still true when $\alpha\le t<1$ it follows that
\[\lim_{n\rightarrow\infty}\frac{r_3}{e^{nt}}=\lim_{n\rightarrow\infty}\frac{a_n}{e^{nt}}=1,\quad\lim_{n\rightarrow\infty}\frac{r_1}{nte^{2nt}}=1,\quad\text{and}\quad\lim_{n\rightarrow\infty}\frac{r_2}{nte^{2nt}}=\lim_{n\rightarrow\infty}\frac{r_{12}}{nte^{2nt}}=0.\]
Thus when $0<\alpha\le t<1$, we have
\[\lim_{n\rightarrow\infty}\frac{r_1^2+r_2^2-2r_{12}}{n^{3/2}tr_3^2\sqrt{(r_1+r_2)^2-4r_{12}^2}}=\lim_{n\rightarrow}\frac{1}{\sqrt{n}}=0.\]
Similarly, when $0<\alpha<1<t$,
\[\lim_{n\rightarrow\infty}\frac{n!(nt)^{\kappa n}}{(\kappa n)!(nt)^{n}}=0.\]
For this region, $a_{\kappa n}$, $b_{\kappa n}$, $c_{\kappa n}$, $a_n$, $b_n$, and $c_n$ are all calculated based on (\ref{Milleran}), (\ref{Millerbn}), and (\ref{Millercn}) where $\kappa=1$ for $a_n$, $b_n$ and $c_n$.  Therefore,
\[\lim_{n\rightarrow\infty}\frac{r_3}{a_n}=1,\quad\lim_{n\rightarrow\infty}\frac{r_1}{a_nc_n-b_n^2}=1,\quad\text{and}\quad\lim_{n\rightarrow\infty}\frac{r_2}{\frac{(nt)^{2n+2}}{(n!)^2}}=\lim_{n\rightarrow\infty}\frac{r_{12}}{\frac{(nt)^{2n+2}}{(n!)^2}}=0.\]
Thus when $0<\alpha<1<t$,
\begin{align*}
	\lim_{n\rightarrow\infty}\frac{r_1^2+r_2^2-2r_{12}}
	{n^{3/2}tr_3^2\sqrt{(r_1+r_2)^2-4r_{12}^2}}
	=&\lim_{n\rightarrow\infty}\frac{(a_nc_n-b_n^2)^2}
	{n^{3/2}ta_n^2(a_nc_n-b_n^2)}
	\\=&\lim_{n\rightarrow\infty}\frac{1}{n^{3/2}(t-1)^2}=0.
\end{align*}

In the case that $m$ is fixed, $a_n$, $b_n$, and $c_n$ again have asymptotic growth of $e^{nt}$, $nte^{nt}$, and $(n^2t^2+nt)e^{nt}$, respectively when $t<1$.  In the case that $t>1$, we have the same asymptotic growth given by (\ref{Milleran}), (\ref{Millerbn}), and (\ref{Millercn}), with $\alpha=1$.  Clearly,
\[\lim_{n\rightarrow\infty}\frac{k^{\beta}(nt)^k}{e^{nt}}=0\quad\text{so}\quad\lim_{n\rightarrow\infty}\frac{a_m}{e^{nt}}=\lim_{n\rightarrow\infty}\frac{b_m}{e^{nt}}=\lim_{n\rightarrow\infty}\frac{c_m}{e^{nt}}=0\]
for $0<t<1$.
Similarly, since for $t>1$,
$\displaystyle\lim_{n\rightarrow\infty}n!/(nt)^{n-j}=0$
then
\[\lim_{n\rightarrow\infty}\frac{a_mn!}{(nt)^n}=\lim_{n\rightarrow\infty}\frac{b_mn!}{(nt)^{n+1}}=\lim_{n\rightarrow\infty}\frac{c_mn!}{(nt)^{n+2}}=0.\]
If we compute the pointwise limit when $t<1$,
\[\frac{r_1^2+r_2^2-2r_{12}}{tr_3^2\sqrt{(r_1+r_2)^2-4r_{12}^2}}\cdot\frac{e^{-4nt}}{e^{-4nt}}\sim\frac{(nt+n^2t^2-n^2t^2)^2+0-0}{t\sqrt{(nt+n^2t^2-n^2t^2)^2-0}}=n.\]
Then, for $t>1$,
\[\frac{r_1^2+r_2^2-2r_{12}}{tr_3^2\sqrt{(r_1+r_2)^2-4r_{12}^2}}\cdot\frac{\frac{(n!)^4}{(nt)^{4n}}}{\frac{n!}{(nt)^n}}\sim\frac{(\frac{t^3}{(t-1)^4})^2+0-0}{t\left(\frac{t}{t-1}\right)^2\sqrt{(\frac{t^3}{(t-1)^4})^2-0}}=\frac{1}{(t-1)^2}.\]
Thus if we divide the integrand by $n$, as $n\rightarrow\infty$ we have a pointwise limit of
\begin{equation}
	\label{lim:fixed}
	\frac{r_1^2+r_2^2-2r_{12}}{ntr_3^2\sqrt{(r_1+r_2)^2-4r_{12}^2}}
	\rightarrow\left\{
	\begin{array}{ll}
		1&t<1\\0&t>1
	\end{array}\right._.
\end{equation}

\subsection{Bounding Functions}
In this section we obtain the bounding functions necessary to apply Theorem \ref{GLDCT}.  When $m=\alpha n$ for $\alpha\in(0,1)$, we divide \ref{MainInt} by $m^{3/2}=\alpha^{3/2}n^{3/2}$ in order to establish (\ref{MainThm}).
\begin{equation}\label{MainInt2}\frac{r_1^2+r_2^2-2r_{12}^2}{n^{3/2}tr_3^2\sqrt{(r_1+r_2)^2-4r_{12}^2}}.
\end{equation}
Then we have the following to help us simplify the denominator:
\begin{lemma}\label{CSApplication}
If $x$ is a positive real number and $a_k=\sum_{j=0}^k\frac{x^j}{j!}$, $b_k=\sum_{j=1}^k\frac{jx^j}{j!}$, and $c_k=\sum_{j=1}^k\frac{j^2x^j}{j!}$, then $(a_n+a_m)c_nc_m\ge c_nb_m^2+c_mb_n^2$, for $n,m\in\mathbb{N}$.
\end{lemma}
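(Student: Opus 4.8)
The plan is to deduce the stated inequality from the single ``moment'' inequality
\[
b_k^2 \le a_k c_k \qquad (k \in \mathbb{N}),
\]
applied once at $k=n$ and once at $k=m$, and then recombined using the positivity of the $c_k$. The label of the lemma already signals that the essential ingredient is the Cauchy--Schwarz inequality, and indeed the three quantities $a_k$, $b_k$, $c_k$ are precisely the zeroth, first, and second moments of the positive weights $w_j = x^j/j!$ over the index set $0 \le j \le k$. Since $x>0$, every $w_j$ is nonnegative, so these moments behave exactly like those of a finite, unnormalized positive measure, for which the inequality $(\text{first moment})^2 \le (\text{zeroth moment})(\text{second moment})$ is standard.

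First I would establish $b_k^2 \le a_k c_k$. Writing $b_k = \sum_{j=0}^k j\,w_j$ (the $j=0$ term vanishes, so including it changes nothing), I set $u_j = j\sqrt{w_j}$ and $v_j = \sqrt{w_j}$, both real because $w_j \ge 0$. Then $\sum_{j=0}^k u_j v_j = b_k$, while $\sum_{j=0}^k u_j^2 = c_k$ and $\sum_{j=0}^k v_j^2 = a_k$. Cauchy--Schwarz applied to the vectors $(u_j)$ and $(v_j)$ gives $b_k^2 = \left(\sum u_j v_j\right)^2 \le \left(\sum u_j^2\right)\left(\sum v_j^2\right) = a_k c_k$, as desired.

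With this in hand, the combination is immediate. Applying the moment inequality at $k=n$ gives $b_n^2 \le a_n c_n$, and at $k=m$ gives $b_m^2 \le a_m c_m$. Because $x>0$ forces $c_n>0$ and $c_m>0$, I may multiply the first inequality by $c_m$ and the second by $c_n$ without reversing them, obtaining $c_m b_n^2 \le a_n c_n c_m$ and $c_n b_m^2 \le a_m c_m c_n$. Adding these two lines yields
\[
c_n b_m^2 + c_m b_n^2 \le (a_n + a_m)\,c_n c_m,
\]
which is exactly the claim.

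Since each step is a routine application of Cauchy--Schwarz together with the sign conditions coming from $x>0$, I do not expect any genuine obstacle. The only points requiring minor care are to verify that the $j=0$ term (which enters $a_k$ but not $b_k$ or $c_k$) is treated consistently, and that multiplying through by $c_m$ and $c_n$ preserves the direction of the inequalities; both follow directly from positivity.
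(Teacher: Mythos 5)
Your proof is correct and follows exactly the route the paper indicates: the paper gives no detailed argument, remarking only that the lemma ``can be proven using a couple applications of Cauchy--Schwarz,'' and your two applications of Cauchy--Schwarz (yielding $b_k^2 \le a_k c_k$ at $k=n$ and $k=m$, then recombining with positivity of $c_n, c_m$) are precisely that. The only microscopic caveat is that if $0 \in \mathbb{N}$ then $c_0 = 0$ rather than $c_0 > 0$, but your argument survives unchanged since multiplying by a nonnegative factor still preserves the inequalities.
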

\noindent which can be proven using a couple applications of Cauchy-Schwarz.  We can find a lower bound on the quantity in the square root:
\begin{align*}
(r_1+r_2)^2-4r_{12}^2=&r_1^2+r_2^2+2r_1r_2-4r_{12}^2
\\=&r_1^2+r_2^2+2(a_n+a_m)^2c_nc_m-2(a_n+a_m)c_nb_m^2
\\&-2(a_n+a_m)c_mb_n^2-2r_{12}^2
\\\ge& r_1^2+r_2^2+2(a_n+a_m)^2(c_nc_m-c_nc_m)-2r_{12}^2
\\=&r_1^2+r_2^2-2r_{12}^2
\end{align*}
resulting in a bounding function for (\ref{MainInt2}) of
\begin{align*}\frac{\sqrt{r_1^2+r_2^2-2r_{12}^2}}{n^{3/2}tr_3^2}=&\frac{\sqrt{(r_1-r_2)^2+2r_3(r_1r_2-r_{12}^2)}}{n^{3/2}tr_3^2}
\\\le&\frac{r_1-r_2}{n^{3/2}tr_3^2}+\frac{\sqrt{2(r_1r_2-r_{12}^2)}}{n^{3/2}tr_3^{3/2}}.
\end{align*}
Then we note that
\[\frac{d}{dt}\left[a_k\right]=n\sum_{j=1}^kj\frac{(nt)^{j-1}}{j!}=\frac{1}{t}b_k,\text{ and }\frac{d}{dt}\left[b_k\right]=\frac{1}{t}c_k,\] and thus
\[\frac{d}{dt}\left[\frac{b_n-b_m}{a_n+a_m}\right]=\frac{(a_n+a_m)(c_n-c_m)-b_n^2+b_m^2}{t(a_n+a_m)^2}=\frac{r_1-r_2}{tr_3^2}.\]
Then
\[\lim_{t\to\infty}\int_\mathbb{R}\frac{r_1-r_2}{n^{3/2}tr_3^2}dt=\lim_{t\rightarrow\infty}\frac{\sum_{k=m+1}^nk\frac{(nt)^k}{k!}}{n^{3/2}(a_m+a_n)}=n^{-1/2}.\]
Moreover,
\[\frac{r_1r_2-r_{12}^2}{n^3t^2r_3^3}=\frac{(a_n+a_m)c_nc_m-c_nb_m^2-c_mb_n^2}{n^3t^2(a_n+a_m)^2}.\]
To establish a bound for this portion we will require the following lemma.
\begin{lemma}\label{ac-b2}
$a_kc_k-b_k^2\le a_{k-1}b_k$.
\end{lemma}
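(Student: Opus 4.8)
The plan is to eliminate $b_k$ and $c_k$ in favour of the partial sums $a_j$, so that the lemma becomes a single inequality among $a_{k-2}$, $a_{k-1}$ and $a_k$ alone. Exactly as in the relations used earlier, $b_k=x\,a_{k-1}$ and $c_k=x^2a_{k-2}+x\,a_{k-1}$. Substituting these into $a_kc_k-b_k^2\le a_{k-1}b_k$ gives
\[
x^2a_ka_{k-2}+x\,a_ka_{k-1}-x^2a_{k-1}^2\le x\,a_{k-1}^2,
\]
and after dividing by $x>0$ and collecting terms this is equivalent to
\[
x\bigl(a_ka_{k-2}-a_{k-1}^2\bigr)+a_{k-1}\bigl(a_k-a_{k-1}\bigr)\le 0.
\]

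Next I would record the elementary identities $a_k-a_{k-1}=x^k/k!$, $a_{k-1}-a_{k-2}=x^{k-1}/(k-1)!$ and $x\cdot x^{k-1}/(k-1)!=k\cdot x^k/k!$. Writing $a_k=a_{k-1}+x^k/k!$ and $a_{k-2}=a_{k-1}-x^{k-1}/(k-1)!$ and expanding $a_ka_{k-2}-a_{k-1}^2$, the $a_{k-1}^2$ terms cancel, the common factor $x^k/k!$ divides out, and the displayed inequality collapses to the claim that, for all $x>0$,
\[
a_{k-1}\,(x+1-k)\le\frac{x^k}{(k-1)!};
\]
call this inequality $(\star)$. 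Thus the whole lemma reduces to proving $(\star)$.

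To prove $(\star)$ I would expand its left-hand side directly. Using $x\,a_{k-1}=\sum_{j=0}^{k-1}x^{j+1}/j!=\sum_{i=1}^{k}i\,x^i/i!$ and peeling off the $i=k$ term, which equals $x^k/(k-1)!$ exactly, one obtains
\[
a_{k-1}(x+1-k)=\frac{x^k}{(k-1)!}+(1-k)+\sum_{i=1}^{k-1}\frac{(i+1-k)\,x^i}{i!}.
\]
Since $1-k\le 0$ and $i+1-k\le 0$ for every $1\le i\le k-1$, and $x>0$, the entire correction after the leading term is nonpositive, which is exactly $(\star)$.

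The step I expect to be the real obstacle is isolating $(\star)$ and recognizing it as the correct target. The sequence $\{a_k\}$ is log-concave, so $x(a_ka_{k-2}-a_{k-1}^2)$ is negative while $a_{k-1}(a_k-a_{k-1})$ is positive; log-concavity by itself does not settle the sign of their sum. The content of the lemma is the precise quantitative cancellation between these two competing terms, and the reindexing above---after which the leading term matches $x^k/(k-1)!$ on the nose and every surviving coefficient $i+1-k$ is manifestly nonpositive---is what converts that cancellation into a one-line sign check. The degenerate case $k=0$ and the (exact equality) case $k=1$ should be checked directly.
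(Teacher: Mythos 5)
Your proof is correct, and it is essentially the paper's own argument written in different variables: under the substitution $b_j=x\,a_{j-1}$, your reduction $a_kc_k-b_k^2-a_{k-1}b_k=x\bigl[x(a_ka_{k-2}-a_{k-1}^2)+a_{k-1}(a_k-a_{k-1})\bigr]$ is exactly the paper's identity $a_kc_k-b_k^2=b_{k+1}b_{k-1}-b_k^2+a_kb_k$ compared against $a_{k-1}b_k$, and your key inequality $(\star)$, namely $a_{k-1}(x+1-k)\le x^k/(k-1)!$, is equivalent to the bound $b_{k-1}\le(k-1)a_{k-1}$ with which the paper closes. Both arguments then finish with the same term-by-term sign check (each surviving coefficient is nonpositive), so the two proofs coincide up to bookkeeping.
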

\begin{proof}
We have that $c_k=ntb_{k-1}+b_k$ and $nta_k=b_{k+1}$.  Then
\[a_kc_k=a_k(ntb_{k-1}+b_k)=b_{k+1}b_{k-1}+a_kb_k\]
so
\begin{align*}a_kc_k-b_k^2=b_{k+1}b_{k-1}-b_k^2+a_kb_k=&\frac{(nt)^{k+1}}{k!}b_{k-1}-\frac{(nt)^{k}}{(k-1)!}b_k+a_kb_k
\\=&\frac{(nt)^{k+1}}{k!}b_{k-1}-\frac{(nt)^{k}}{(k-1)!}b_k+\frac{(nt)^k}{k!}b_k+a_{k-1}b_k.
\end{align*}
Thus we prove our lemma if $\frac{(nt)^{k+1}}{k!}b_{k-1}-\frac{(nt)^{k}}{(k-1)!}b_k+\frac{(nt)^k}{k!}b_k\le0$.  We use the fact that $b_k=nta_{k-1}$ to say that
\[\frac{(nt)^k}{k!}b_k=\frac{(nt)^{k+1}}{k!}a_{k-1}.\]
Thus,
\begin{align*}\frac{(nt)^{k+1}}{k!}b_{k-1}-\frac{(nt)^{k}}{(k-1)!}b_k+\frac{(nt)^k}{k!}b_k=&\frac{(nt)^{k+1}}{k!}(b_{k-1}+a_{k-1})-\frac{(nt)^{k}}{(k-1)!}b_k
\\\le&\frac{(nt)^{k+1}}{k!}((k-1)a_{k-1}+a_{k-1})-\frac{(nt)^{k}}{(k-1)!}b_k
\\=&k\frac{(nt)^{k}}{k!}(nta_{k-1})-\frac{(nt)^{k}}{(k-1)!}b_k=0.
\end{align*}
\end{proof}

Equipped with this lemma and the fact that $c_k=ntb_{k-1}+b_k$ and $nta_k=b_{k+1}$ we can see
\begin{align*}r_1r_2-r_{12}^2\le &b_n^2a_nb_{m-1}+b_na_na_{n-1}b_m+b_n^2a_{m-1}a_{m-1}+b_na_{n-1}b_{m+1}b_{m-1}
\\&+b_{n+1}b_{n-1}b_ma_m+b_na_nb_ma_m+b_{n-1}b_m^2a_m+b_nb_ma_ma_{m-1}.
\end{align*}
Since $n\ge m$, it is clear each term in this sum is less than or equal to $b_n^2a_nb_{m}$ or $b_na_na_{n-1}b_{m}$ for sufficiently large $n$ and thus we search for an integrable bound on the root of these terms divided by $n^{3/2}tr_3^2$.
\[\frac{\sqrt{b_n^2a_nb_{m}}}{n^{3/2}tr_3^2}=\frac{a_{n-1}\sqrt{ta_na_{m-1}}}{(a_n+a_m)^2}\le\sqrt{\frac{ta_{m-1}}{a_n}}\le\left\{\begin{array}{ll}
\sqrt{t},& t<\delta\\t^{-(n-m)/2},&t>\delta
\end{array}\right.\]which is integrable for $\delta>1$ and $m\le n-4$.
\[\frac{\sqrt{b_na_na_{n-1}b_m}}{n^{3/2}tr_3^2}=\frac{a_{n-1}\sqrt{a_na_{m-1}}}{n^{1/2}(a_n+a_m)^2}\le \sqrt{\frac{a_{m-1}}{na_n}}\le\left\{\begin{array}{ll}
n^{-1/2},& t<\delta\\n^{-1/2}t^{-(n-m+1)/2},&t>\delta
\end{array}\right.\]which is again integrable for $\delta>1$ and $m\le n-3$.
Thus for $m=\alpha n$, $\alpha\in(0,1)$ we have the integrable sequence of bounding functions
\begin{equation}\label{alphanbound}
g_{n,m}(t)=\frac{r_1-r_2}{n^{3/2}tr_3^2}+16\left\{\begin{array}{ll}
\sqrt{t+1},&t<\delta\\t^{-(n-m)/2},&t>\delta
\end{array}\right._.
\end{equation}
Notice that we must exclude the case when $\alpha=1$ in order for $g_{n,\alpha n}$ to be integrable for some large $n$, thus we address this case separately.

When $m=n$, we may reduce the integrand in (\ref{MainInt}) to
\begin{equation}\label{eqnMequalsN}
\frac{\sqrt{a_nc_n(a_nc_n-b_n^2)}}{2n^{3/2}ta_n^2}
\end{equation}
We notice
\begin{align*}
\frac{a_nc_n(a_nc_n-b_n^2)}{n^3t^2a_n^4}=\frac{c_n}{n^2ta_n}\frac{a_nc_n-b_n^2}{nta_n^2}\le\frac{c_n}{n^2ta_n}\frac{a_{n-1}b_n}{nta_n^2}\le\frac{(n^2t^2+nt)a_n}{n^2ta_n}\frac{nta_n}{nta_n^2}=t+\frac{1}{n}
\end{align*}
giving us an upper bound of $\frac{1}{2}\sqrt{t+1}$ for (\ref{eqnMequalsN}), where the first inequality follows from Lemma \ref{ac-b2}.
For the tail we use the fact that $a_n\ge ta_{n-1}$ along with $c_n\le n^2a_n$ and Lemma \ref{ac-b2} to see that
\[\frac{\sqrt{a_nc_n(a_nc_n-b_n^2)}}{n^{3/2}ta_n^2}=\sqrt{\frac{c_n}{n^2a_n}}\sqrt{\frac{a_nc_n-b_n^2}{nt^2a_n^2}}\le\sqrt{\frac{a_{n-1}b_n}{nt^2a_n^2}}=\sqrt{\frac{a_{n-1}^2}{ta_n^2}}\le t^{-3/2}\]
which gives us a bounding function for $t\in(0,\infty)$ of
\[g(t)=\frac{1}{2}\cdot\left\{\begin{array}{ll}
\textstyle\sqrt{t+1},&t<\delta\\t^{-3/2},&t>\delta
\end{array}\right.\]

For $m$ fixed we need to show asymptotic growth of $\mathbb{E}\mathcal{N}$ to be $n$ and thus we divide by this before we take the limit.  We again use lemma \ref{CSApplication} and the subsequent simplification used in the $m=\alpha n$ case to bound our integrand by
\[\frac{\sqrt{r_1^2+r_2^2-2r_{12}^2}}{ntr_3^2}=\frac{\sqrt{(r_1-r_2)^2+2(r_1r_2-r_{12}^2)}}{ntr_3^2}\le\frac{r_1-r_2}{ntr_3^2}+\sqrt{2}\frac{\sqrt{r_1r_2-r_{12}^2}}{ntr_3^2}.\]
Then we have
\[\sqrt{2\frac{r_1r_2-r_{12}^2}{n^2t^2r_3^4}}=\sqrt{2\frac{r_3c_nc_m-c_nb_m^2-c_mb_n^2}{n^2t^2r_3^3}}\le\frac{2c_m}{ntr_3}+\frac{r_3c_n-b_n^2-\frac{c_nb_m^2}{c_m}}{ntr_3^2}\]by comparison of arithmetic and geometric means.  By manipulating sums, we can easily see that $c_mb_n\le c_nb_m$ so
\[\sqrt{2\frac{r_1r_2-r_{12}^2}{n^2t^2r_3^4}}\le\frac{2c_m}{ntr_3}+\frac{r_1-r_{12}}{ntr_3^2}.\]
Now, we need to bound $c_m/ntr_3$ by an integrable function.
\[\frac{c_m}{ntr_3}\le\frac{m^2a_m}{nta_{m+1}}\le\frac{m^3\frac{(nt)^{k-1}}{k!}}{\frac{(nt)^{k-1}}{(k-1)!}+\frac{(nt)^{k+1}}{(k+1)!}}\le\frac{m^3(m+1)}{1+(nt)^{2}}\]which is integrable on $(0,\infty)$ with an integral of $\frac{m^3(m+1)\pi}{2n}$.
Furthermore,
\[\frac{d}{dt}\left[\frac{2b_n-b_m}{a_n+a_m}\right]=\frac{2r_1-r_2-r_{12}}{tr_3^2}\]which is the remaining portion of our integrand bound.  Since
\[\lim_{t\rightarrow\infty}\frac{1}{n}\frac{2b_n-b_m}{a_n+a_m}=2,\] this is also integrable on $t\in(0,\infty)$ giving us an integrable bounding function of
\[\frac{r_1^2+r_2^2-2r_{12}^2}{ntr_3^2\sqrt{(r_1+r_2)^2-4r_{12}^2}}\le \frac{2r_1-r_2-r_{12}}{ntr_3^2}+\frac{m^3(m+1)}{1+(nt)^{2}}.\]

\subsection{Applying Theorem \ref{GLDCT}}
In all cases we now have the tools to apply the Generalized Dominated Convergence Theorem.  For $m=\alpha n$ we have the sequence of bounding functions given in (\ref{alphanbound})
whose limit integral is
\begin{equation}
\lim_{n\rightarrow\infty}\int_\mathbb{R}g_{n,\alpha n}(t)\,dt=\lim_{n\rightarrow\infty}n^{-1/2}+{\textstyle\frac{16}{3}}(\delta+1)^{3/2}+\frac{8\delta^{-((1-\alpha)n-2)/2}}{n(1-\alpha)-2}={\textstyle\frac{16}{3}}(\delta+1)^{3/2}
\end{equation}if $n>2/(1-\alpha)$,
and whose limit is
\begin{equation}
g(t)=\left\{\begin{array}{ll}8\sqrt{t+1},&t<\delta\\0,&t>\delta\end{array}\right.
.\end{equation}
Thus we have the necessary condition
\begin{equation}
\lim_{n\rightarrow\infty}\int_\mathbb{R}g_{n,\alpha n}(t)\,dt=\int_\mathbb{R}g(t)\,dt=\frac{16}{3}(\delta+1)^{3/2}
\end{equation}
and we may apply the theorem to say
\begin{equation}
\lim_{n\rightarrow\infty}\frac{1}{n^{3/2}}\mathbb{E}\mathcal{N}_H(\mathbb{C})=\int_0^\alpha\frac{1}{2}\sqrt{t}dt=\frac{1}{3}\alpha^{3/2}
\end{equation}
implying $\mathbb{E}\mathcal{N}_H(\mathbb{C})\sim \frac{1}{3}m^{3/2}$.

For $m=n$, the result is the same.  This can be see by noting
\[\int_\mathbb{R}g(t)=\lim_{n\rightarrow\infty}\frac{1}{3}((\delta+1)^{3/2}-1)+\frac{1}{4}\delta^{-1/2}\] and using the Lebesgue Dominated Convergence Theorem.

For the $m$ fixed case, we have the sequence of bounding functions,
\begin{equation}
g_n=\frac{2r_1-r_2-r_{12}}{ntr_3^2}+\frac{m^3(m+1)}{1+(nt)^2}
\end{equation}
and as we mentioned above
\begin{equation}
\int_0^\infty\lim_{n\rightarrow\infty}g_n(t)dt=\int_0^12dt=2=\lim_{n\rightarrow\infty}2+\frac{m^3(m+1)\pi}{2n}=\lim_{n\rightarrow\infty}\int_0^\infty g_n.
\end{equation}
Applying theorem \ref{GLDCT}, we get
\begin{equation}
\lim_{n\rightarrow\infty}\frac{1}{n}\mathbb{E}\mathcal{N}_H(\mathbb{C})=\int_0^1dt=1\quad\implies \quad\mathbb{E}\mathcal{N}_H(\mathbb{C})\sim n.
\end{equation}

\section{First Intensity Functions}\label{sec1stInt}
Inspecting the first intensity functions offers some insight into the distribution of zeros of the Weyl polynomials.

Define the first intensity function for the expected number of zeros of $H_{n,m}(z)$ over a domain $T$,
\[I_{n,m}(z)=\frac{r_1^2+r_2^2-2r_{12}^2}{\lvert z\rvert^2r_3^2\sqrt{(r_1+r_2)^2-4r_{12}^2}}\]then for the analytic case we have
\[I_{n,0}(z)=\frac{r_1}{\lvert z\rvert^2r_3^2}.\]
Interestingly $I_{n,m} \approx \chi_{|z|\leq 1}$  (in fact by (\ref{lim:fixed}) we have equality as $n \rightarrow \infty$).
\begin{figure}[ht]
\begin{subfigure}[b]{.25\textwidth}
\includegraphics[width=\textwidth]{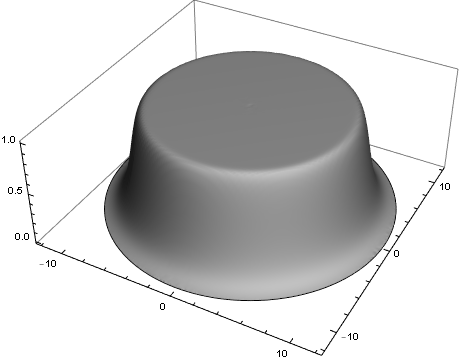}
\end{subfigure}\quad
\begin{subfigure}[b]{.3\textwidth}
\includegraphics[width=\textwidth]{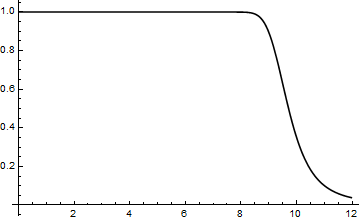}
\end{subfigure}
\caption{Plot of $I_{100,0}(z)$ in 3 dimensions $(x,y,I(z))$, $z=x+iy$.}
\end{figure}
This is quite different than the traditional Kac model in which all the zeros accumulate on the circle of radius $1$.  

$I_{n,0}$ is of particular interest since $I_{n,m=m_0}\rightarrow I_{n,0}$ as $n\rightarrow\infty$ and, perhaps more importantly, it offers a clue into how $p$ and $q$ contribute to the zeros of $H$.
Consider the difference of the $I_{100,16}$ and $I_{100,0}$. 
\begin{figure}[ht]\label{fig:alphan}
\begin{subfigure}[b]{.55\textwidth}\label{alphan1}
\includegraphics[width=.45\textwidth]{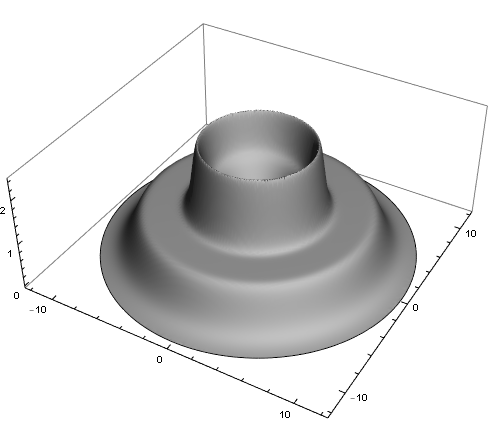}
\quad
\includegraphics[width=.45\textwidth]{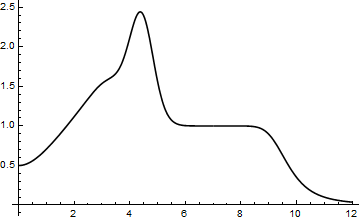}
\caption{$I_{100,16}(z)$}
\end{subfigure}
\hspace{-.04\textwidth}
\begin{subfigure}[b]{.27\textwidth}\label{alphan2}
\includegraphics[width=\textwidth]{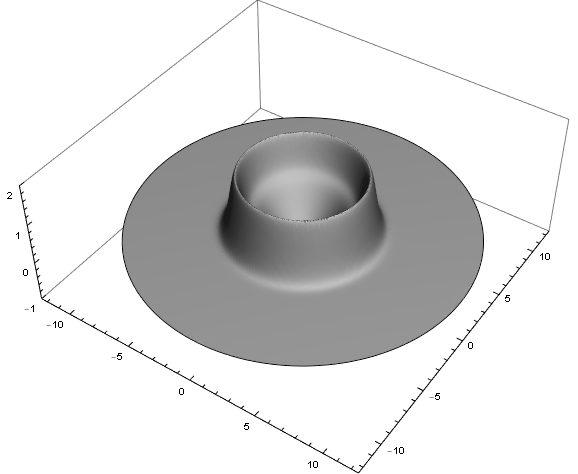}
\caption{$I_{100,16}-I_{100,0}$}
\end{subfigure}
\caption{}
\end{figure}
It appears that $q_m$ only contributes to the zeros of $H$ which lie inside the disc of radius $\sqrt{m}$.

\section{Conclusion}
We have shown that in the case that $m=\alpha n+O(1)$ that the expected number of zeros of a random Weyl polynomial has growth-order $\frac{1}{3}m^{\frac{3}{2}}$ Moreover, in this paper as well as in Li and Wei's work on the Kostlan model [4], Lerario and Lundberg's work on the truncated model [3] and Thomack's work on the naive model [5] we have the result that when $m$ is fixed the number of zeros grows linearly in $n$. This raises the question: Is there a class of Gaussian harmonic polynomials such that the expected number of zeros in the $m$ fixed case increases faster than $n$?

We end with a conjecture motivated by the suggestive material in \S\ref{sec1stInt}.
\begin{conjecture}
For $H_{n,m}(z)$ a random harmonic polynomial following the Weyl model,
\[\mathbb{E}\mathcal{N}_{H}(\mathbb{C})\sim \textstyle\frac{1}{3}m^{3/2}+n+O(\sqrt{n}).\]
\end{conjecture}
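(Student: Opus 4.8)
The plan is to establish the full two-term expansion by separating an ``analytic background'' that contributes the $n$ from a ``harmonic enhancement'' concentrated near the origin that contributes the $\tfrac13 m^{3/2}$. The key exact input is the identity
\[\int_{\mathbb{C}} I_{n,0}(z)\,dA(z)=n,\]
which holds because $I_{n,0}$ is the first intensity of $H_{n,0}(z)=p(z)+b_0$, an analytic polynomial of degree $n$ with almost surely nonzero leading coefficient; such a polynomial has exactly $n$ zeros, so its Kac--Rice integral equals $n$ exactly. Invoking Theorem \ref{thmLiWeiGeneral}, I would then record the decomposition
\[\mathbb{E}\mathcal{N}_{n,m}(\mathbb{C})=n+\int_{\mathbb{C}}\bigl(I_{n,m}(z)-I_{n,0}(z)\bigr)\,dA(z),\]
which reduces the conjecture to showing that the harmonic correction $I_{n,m}-I_{n,0}$ integrates to $\tfrac13 m^{3/2}+O(\sqrt n)$. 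This is precisely the phenomenon suggested by \S\ref{sec1stInt} and the accompanying figures: the term $q$ contributes essentially only inside the disc of radius $\sqrt m$.

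Next I would split the $|z|^2=nt$ integral into the inner disc $t<\alpha$, the annulus $\alpha<t<1$, and the exterior $t>1$, and estimate the correction region by region. On the annulus and exterior the sums $a_m,b_m,c_m$ coming from $q$ are exponentially smaller than $a_n,b_n,c_n$, since the peak of $(nt)^j/j!$ sits at $j\approx nt>m$; thus $I_{n,m}=I_{n,0}+(\text{exponentially small})$ there, and these regions contribute $o(1)$ to the correction. All of the mass of $I_{n,m}-I_{n,0}$ therefore comes from the inner disc, where the leading-order computation already carried out in the proof of Theorem \ref{MainResult} produces an integrand $\sim\tfrac12 n^{3/2}\sqrt t$ whose integral over $(0,\alpha)$ is exactly $\tfrac13 m^{3/2}$; on the same disc the subtracted analytic density satisfies $I_{n,0}\sim n$, consistently with the fixed-$m$ analysis. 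The remaining task is to show that everything beyond these two identified pieces sums to $O(\sqrt n)$.

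The hard part is exactly this error control, and its source is a leading-order cancellation in the Kac--Rice integrand. On the inner disc one checks that $r_1$, $r_2$, and $r_{12}$ all share the leading asymptotic $\sim n^2t^2e^{2nt}$, so both $r_1^2+r_2^2-2r_{12}^2$ and $(r_1+r_2)^2-4r_{12}^2$ vanish to leading order and the density $\tfrac12 n^{3/2}\sqrt t$ is produced entirely by the first subleading discrepancy among these three quantities. Extracting the conjectured remainder therefore forces an expansion of $a_k,b_k,c_k$ one order beyond what Theorem \ref{MainResult} required, carried out uniformly in $t$. I expect the genuine obstacle to live in the transition layer $|t-\alpha|\lesssim n^{-1/2}$ at $|z|=\sqrt m$, where $a_m$ crosses over from $\sim e^{nt}$ to $\sim (nt)^m/m!$ and the enhancement switches off; here the crude Laplace and Watson's-lemma estimates must be replaced by a complementary-error-function analysis of $\Gamma(\alpha n,nt)$. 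One finds the density in this layer to be, to leading order, $\tfrac12 n^{3/2}\sqrt\alpha$ times a symmetric sigmoid in the rescaled variable $u=\sqrt n(\alpha-t)$. A naive cutoff at $t=\alpha$ would mis-estimate the integral at order $n$, but the symmetry of the sigmoid should make the excess just inside $\alpha$ cancel the deficit just outside, collapsing the net layer contribution to $O(\sqrt n)$. Making this cancellation rigorous, and verifying that no other region secretly contributes at an order between $\sqrt n$ and $n$, is the main technical barrier; by contrast the region-splitting and the exact identity $\int I_{n,0}=n$ are the routine ingredients.
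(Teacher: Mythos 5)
The statement you are addressing is the paper's closing \emph{conjecture}: the authors give no proof of it, only the numerical evidence of \S\ref{sec1stInt}, so your proposal must be judged on its own merits rather than against a hidden argument. Judged that way, it contains two sound ingredients --- the exact identity $\frac{1}{\pi}\int_{\mathbb{C}}I_{n,0}\,dA=n$ (correct, by the Fundamental Theorem of Algebra together with Theorem \ref{thmLiWeiGeneral}; note you dropped the factor $\frac{1}{\pi}$) and the identification of the transition layer at $|z|=\sqrt{m}$ as the crux --- but both of the steps you leave open fail, and they fail at order $n$, far above the conjectured error $O(\sqrt{n})$.

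First, the inner-disc bookkeeping is off by $\alpha n$. In the variable $|z|^2=nt$, for fixed $t<\alpha$ every $a_k$ entering $r_1,r_2,r_3,r_{12}$ may be replaced by $e^{nt}$ (the neglected tails are exponentially small), and then the integrand of (\ref{MainInt}) is exactly $\frac{n}{2}\sqrt{nt+1}$ per unit $t$: the numerator becomes $8\bigl((nt)^3+(nt)^2\bigr)e^{4nt}$ and the square-root factor $4\sqrt{(nt)^3+(nt)^2}\,e^{2nt}$. Since $\frac{n}{2}\sqrt{nt+1}=\frac12 n^{3/2}\sqrt{t}+O(\sqrt{n}/\sqrt{t})$, the analytic background of size $n$ is \emph{replaced} by the harmonic enhancement on the inner disc, not superimposed on it. Hence $\int_0^{\alpha}\bigl(I_{n,m}-I_{n,0}\bigr)\,dt=\frac13 m^{3/2}-\alpha n+O(\sqrt{n})$, not $\frac13 m^{3/2}+O(\sqrt{n})$ as you assert. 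Combined with your other claim (that the layer nets out to $O(\sqrt{n})$), your decomposition would prove $\mathbb{E}\mathcal{N}\sim\frac13 m^{3/2}+(1-\alpha)n$, which is not the conjecture; to recover the conjecture you would need the layer to contribute exactly $+\alpha n$, contradicting your own cancellation claim.

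Second, the symmetric-sigmoid cancellation is false. Carrying out the leading-order layer analysis: set $u=(nt-m)/\sqrt{m}$ and $\phi=e^{-nt}a_m$, so that $\phi\to\Phi(-u)$ (the standard normal distribution function, by the central limit theorem for the Poisson tail), and substituting $a_m\approx a_{m-1}\approx a_{m-2}\approx\phi e^{nt}$ into the Kac--Rice integrand gives the density $\frac{\sqrt{\phi}}{1+\phi}\,n^{3/2}\sqrt{t}$ through the layer. The profile $g(\phi)=\sqrt{\phi}/(1+\phi)$ is badly asymmetric: it rises from $\phi=0$ like $\sqrt{\phi}$ but is flat at $\phi=1$ (indeed $g'(1)=0$), and at the layer's midpoint $g(1/2)=\sqrt{2}/3\approx0.47$, so the density there is still about $94\%$ of its inner-disc value rather than $50\%$. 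One checks $g(\phi)+g(1-\phi)>\frac12$ strictly on $(0,1)$, so the excess just outside $t=\alpha$ strictly exceeds the deficit just inside, and the net layer contribution is $C\alpha n+O(\sqrt{n})$ with $C=\int_0^\infty\bigl(g(\Phi(u))+g(1-\Phi(u))-\tfrac12\bigr)\,du\approx 0.75$, a strictly positive constant that is (numerically) not equal to $1$. Assembling the regions, this analysis yields $\mathbb{E}\mathcal{N}=\frac13 m^{3/2}+\bigl(1-(1-C)\alpha\bigr)n+O(\sqrt{n})$; your route, pushed to completion, supports the conjecture only if $C=1$. So the missing step is not a technicality: any proof, correction, or disproof of this conjecture must evaluate this layer integral exactly, and that computation is absent both from the paper and from your sketch --- and what it appears to give is an order-$n$ coefficient depending on $\alpha$, i.e., evidence that the conjecture as stated is false.
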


\bibliographystyle{amsplainABBREV}

\bibliography{references}

\end{document}